\newtheorem{theorem}{Theorem}%[section]
\newtheorem{definition}[theorem]{Definition}
\newtheorem{lemma}[theorem]{Lemma}
\newtheorem{proposition}[theorem]{Proposition}
\newtheorem{remark}[theorem]{Remark}
\newtheorem{example}[theorem]{Example}
\newtheorem{corollary}[theorem]{Corollary}
\title{c-ideals in complemented posets}
\author{Ivan~Chajda, Miroslav~Kola\v r\'ik and Helmut~L\"anger}
\date{}
\begin{document}

\footnotetext{Support of the research of the first and third author by the Austrian Science Fund (FWF), project I~4579-N, and the Czech Science Foundation (GA\v CR), project 20-09869L, entitled ``The many facets of orthomodularity'', is gratefully acknowledged.}

\maketitle

\begin{abstract}
In their recent paper on posets with a pseudocomplementation denoted by $*$ the first and the third author introduced the concept of a $*$-ideal. This concept is in fact an extension of a similar concept introduced in distributive pseudocomplemented lattices and semilattices by several authors, see References. Now we apply this concept of a c-ideal (dually, c-filter) to complemented posets where the complementation need neither be antitone nor an involution, but still satisfies some weak conditions. We show when an ideal or filter in such a poset is a c-ideal or c-filter, respectively, and we prove basic properties of them. Finally, we prove so-called Separation Theorems for c-ideals. The text is illustrated by several examples.
\end{abstract}

{\bf AMS Subject Classification:} 06A11, 06C15

{\bf Keywords:} Complemented poset, antitone involution, ideal, filter, ultrafilter, c-ideal, c-filter, c-condition, Separation Theorems.

The concept of a $\delta$-ideal was introduced recently in pseudocomplemented distributive lattices and semilattices in \cite{NN}, \cite R and \cite{TCB}. Later on, it was extended to pseudocomplemented posets under the name $*$-ideal in \cite{CL} where $*$ means the pseudocomplementation on the poset in question. The authors used also some results taken from their previous paper \cite{CL21}.

It turns out that in complemented posets the aforementioned concepts of ideals and filters play also an important role and this fact motivated us to extend our study to complemented posets where the complementation need not be an antitone involution in all cases. Several such examples are included in the paper. Hence, we obtain the concept of a c-ideal. Our goals are to present several basic properties of c-ideals and prove so-called Separation Theorems showing that under certain assumptions for every ideal $I$ and certain filters $F$ of a complemented poset such that $I\cap F=\emptyset$ there exists a c-ideal $J$ including $I$ with $J\cap F=\emptyset$.

In what follows we collect the concepts used throughout the paper. Some of them are familiarly known and can be found e.g.\ in \cite B.

Let $(P,\leq)$ be a poset and $a,b\in P$ and $A,B\subseteq P$. We define
\begin{align*}
L(A) & :=\{x\in P\mid x\leq y\text{ for all }y\in A\}, \\
U(A) & :=\{x\in P\mid y\leq x\text{ for all }y\in A\},
\end{align*}
the so-called {\em lower cone} and {\em upper cone} of $A$, respectively. Instead of $L(\{a\})$, $L(\{a,b\})$, $L(A\cup\{a\})$, $L(A\cup B)$ and $L\big(U(A)\big)$ we simply write $L(a)$, $L(a,b)$, $L(A,a)$, $L(A,B)$ and $LU(A)$, respectively. Analogously, we proceed in similar cases.

Consider a {\em bounded} poset $(P,\leq,0,1)$, i.e.\ a poset with bottom element $0$ and top element $1$. A unary operation ${}'$ on $P$ is called a {\em complementation} if for every $x\in P$ there exist $x\vee x'$ and $x\wedge x'$ and, moreover, $x\vee x'=1$ and $x\wedge x'=0.$ If ${}'$ is a complementation on $(P,\leq,0,1)$ then $(P,\leq,{}',0,1)$ will be called a {\em complemented poset}. Clearly, $0'=0\vee0'=1$ and $1'=1\wedge1'=0$.

A unary operation ${}'$ on a poset $(P,\leq)$ is called an {\em involution} if it satisfies the identity $x''\approx x$ and it is called {\em antitone} if $x,y\in P$ and $x\leq y$ together imply $y'\leq x'$.

Let us note that if ${}'$ is an antitone involution on a given poset $(P,\leq)$ then ${}'$ satisfies the identity $x'''\approx x'$ and we can use {\em De Morgan's laws}, i.e.\ $\big(L(x,y)\big)'=U(x',y')$ and $\big(U(x,y)\big)'=L(x',y')$ for all $x,y\in P$.

Let $\mathbf P=(P,\leq,{}',0,1)$ be a complemented poset. For a subset $A$ of $P$ we define
\begin{align*}
 A' & :=\{x'\mid x\in A\}, \\
A_0 & :=\{x\in P\mid x'\in A\}.
\end{align*}
It is an easy observation that $A\subseteq B$ implies $A_0\subseteq B_0$. An element $a$ of $P$ is called {\em Boolean} if $a''=a$. For non-empty subsets $I$ and $F$ of $P$ we define

$I$ is called an {\em ideal} of $\mathbf P$ if $L(x)\subseteq I$ and $U(x,y)\cap I\neq\emptyset$ for all $x,y\in I$, \\
$F$ is called a {\em filter} of $\mathbf P$ if $U(x)\subseteq F$ and $L(x,y)\cap F\neq\emptyset$ for all $x,y\in F$.

\begin{lemma}
Let $\mathbf P=(P,\leq,{}',0,1)$ be a complemented poset with antitone complementation satisfying $x\leq x''$ for all $x\in P$, let $a\in P$ and assume that every ideal of $\mathbf P$ containing $a$ contains $a''$. Then $a$ is a Boolean element of $\mathbf P$.
\end{lemma}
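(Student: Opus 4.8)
The plan is to apply the hypothesis to the most natural ideal available, namely the principal ideal $L(a)$. The first step is to verify that $L(a)$ really is an ideal of $\mathbf P$. Recalling the definition, I must check the two closure conditions for all $x,y\in L(a)$. The condition $L(x)\subseteq L(a)$ follows at once from transitivity: if $x\in L(a)$ then $x\leq a$, so everything below $x$ lies below $a$. For the condition $U(x,y)\cap L(a)\neq\emptyset$, I observe that whenever $x,y\leq a$ the element $a$ itself is an upper bound of $\{x,y\}$, and since $a\leq a$ we have $a\in L(a)$; hence $a\in U(x,y)\cap L(a)$. Thus both axioms hold and $L(a)$ is an ideal (it is non-empty, containing $0$ and $a$).

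Once $L(a)$ is recognized as an ideal, the rest is immediate. Since $a\in L(a)$, the standing assumption that every ideal containing $a$ must contain $a''$ forces $a''\in L(a)$, which by the very definition of $L(a)$ means $a''\leq a$. Combining this inequality with the hypothesis $a\leq a''$ gives $a=a''$, so $a$ is a Boolean element of $\mathbf P$.

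The argument is short, and the only place that demands any attention is the verification that $L(a)$ fulfills the ideal axioms; but as noted, both conditions collapse to the single fact that $a$ dominates every member of $L(a)$ while itself belonging to $L(a)$. It is worth remarking that the antitonicity of the complementation is not actually invoked in this deduction — only the inequality $a\leq a''$ is used. Consequently the genuine content of the proof lies entirely in identifying the principal ideal $L(a)$ as the correct witness to which the hypothesis should be applied, and there is no real obstacle beyond this recognition.
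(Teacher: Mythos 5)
Your proof is correct and follows the same route as the paper: apply the hypothesis to the principal ideal $L(a)$, deduce $a''\leq a$, and combine with $a\leq a''$. You merely spell out the (easy) verification that $L(a)$ is an ideal, which the paper takes for granted, and your observation that antitonicity is never used is also accurate.
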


\begin{proof}
Everyone of the following statements implies the next one: $a\in L(a)$ and $L(a)$ is an ideal of $\mathbf P$, $a''\in L(a)$, $a\leq a''\leq a$, $a''=a$, $a$ is a Boolean element of $\mathbf P$.
\end{proof}

For an ideal $I$ of $\mathbf P$ we say that

$I$ is called {\em proper} if $I\neq P$, \\
$I$ is called {\em maximal} if $I$ is a maximal proper ideal of $\mathbf P$, \\
$I$ is called a {\em prime ideal} if $I\neq P$, and $\{x,y\}\cap I\neq\emptyset$ for all $x,y\in P$ with $L(x,y)\subseteq I$.

For an filter $F$ of $\mathbf P$ we define

$F$ is called {\em proper} if $F\neq P$, \\
$F$ is called an {\em ultrafilter} if $F$ is a maximal proper filter of $\mathbf P$, \\
$I$ is called a {\em prime filter} if $F\neq P$, and $\{x,y\}\cap F\neq\emptyset$ for all $x,y\in P$ with $L(x,y)\subseteq F$.

Now we define our main concepts.

An ideal $I$ of $\mathbf P$ is called a {\em c-ideal} if there exists some filter $F$ of $\mathbf P$ with $F_0=I$, \\
A filter $F$ of $\mathbf P$ is called a {\em c-filter} if there exists some ideal $I$ of $\mathbf P$ with $I_0=F$.

Namely, these concepts enable to separate ideals from filters in complemented posets as expressed in the so-called {\em Separation Theorems} Theorems~\ref{th3} and \ref{th4}).

It is evident that the concept of an ideal and a filter are dual to each other.

Clearly, $0,1$ are Boolean elements of a complemented poset $\mathbf P$. Further, $\{0\}$ and $P$ are ideals of $\mathbf P$, and $0\in I$ for all ideals $I$ of $\mathbf P$. Moreover, an ideal $I$ of $\mathbf P$ is proper if and only if $1\notin I$. Dual statements hold for filters.

The ideals of the form $L(a)$ with $a\in P$ are called {\em principal ideals}, and the filters of the form $U(a)$ with $a\in P$ {\em principal filters}.

Let us repeat the following useful results from \cite{CL}.

\begin{lemma}
{\rm(\cite{CL})} Let $\mathbf P=(P,\leq)$ be a poset and $I$ an ideal of $\mathbf P$. Then the following are equivalent:
\begin{enumerate}[{\rm(i)}]
\item $I$ is a prime ideal of $\mathbf P$.
\item $P\setminus I$ is a prime filter of $\mathbf P$.
\item $P\setminus I$ is a filter of $\mathbf P$.
\end{enumerate}
Moreover, the mapping $I\mapsto P\setminus I$ is a bijection from the set of all prime ideals of $\mathbf P$ to the set of all prime filters of $\mathbf P$.
\end{lemma}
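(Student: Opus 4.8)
The plan is to set $F:=P\setminus I$ throughout, to establish the three conditions in the cyclic order (i)$\Rightarrow$(iii)$\Rightarrow$(ii)$\Rightarrow$(i), and then to read off the bijection from the fact that set-complementation is an involution, combined with order-duality. I dispose of the properness bookkeeping first, since it is immediate: $I$ is an ideal, hence non-empty, which forces $F\neq P$; and whenever $F$ is exhibited as a filter it is non-empty, which forces $I\neq P$. The real content lies in the interplay between the filter axioms for $F$ and the prime condition on $I$, and here I read \emph{prime filter} as the genuine order-dual of \emph{prime ideal}, namely $\{x,y\}\cap F\neq\emptyset$ whenever $U(x,y)\subseteq F$ (upper cones in place of the lower cones appearing for ideals).

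For (i)$\Rightarrow$(iii) I would verify the two filter axioms for $F$. Upward closure: if $x\in F$ and $x\leq z$ with $z\in I$, then $x\in L(z)\subseteq I$ because $I$ is an ideal, contradicting $x\notin I$. For the remaining axiom, if $x,y\in F$ and $L(x,y)\cap F=\emptyset$, then $L(x,y)\subseteq I$, so primality of $I$ forces $x\in I$ or $y\in I$, a contradiction; hence $L(x,y)\cap F\neq\emptyset$. For (iii)$\Rightarrow$(ii), suppose $F$ is a filter and $U(x,y)\subseteq F$; were neither $x$ nor $y$ in $F$, then $x,y\in I$, and since $I$ is an ideal $U(x,y)\cap I\neq\emptyset$, contradicting $U(x,y)\subseteq F=P\setminus I$; thus $F$ is a prime filter. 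Finally (ii)$\Rightarrow$(i): a prime filter is in particular a filter, so if $L(x,y)\subseteq I$ while $x,y\notin I$, then $x,y\in F$ and the filter axiom yields $L(x,y)\cap F\neq\emptyset$, again contradicting $L(x,y)\subseteq I$; hence $I$ is prime. Each step is a short argument by contradiction, and since $\mathbf P$ is not assumed bounded I would take care that each argument either avoids assuming the relevant cone is non-empty or derives that non-emptiness from the prime or ideal condition itself.

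For the concluding bijection I would use that $\Phi\colon A\mapsto P\setminus A$ is an involution on the power set of $P$, so it is automatically injective and equal to its own inverse. The equivalence (i)$\Leftrightarrow$(ii) shows that $\Phi$ sends prime ideals to prime filters, while the order-dual of the very same equivalence (ideals, lower cones and prime ideals being interchanged with filters, upper cones and prime filters) shows that $\Phi$ sends prime filters to prime ideals. Restricting the involution $\Phi$ to the prime ideals therefore gives a bijection onto the prime filters whose inverse is the restriction of $\Phi$ to the prime filters, which is exactly the asserted bijection.

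I expect no single genuinely hard step here; the arguments are all two-line contradictions resting on the defining axioms of ideals and filters. The one point demanding care is to read \emph{prime filter} as the true order-dual of \emph{prime ideal} (stated on upper cones), because it is precisely this duality that makes the contradiction arguments symmetric and that powers the final bijection.
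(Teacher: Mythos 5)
Your proof is correct, but there is nothing in the paper to compare it against: this lemma is quoted from \cite{CL} without proof (``Let us repeat the following useful results from \cite{CL}''), so your argument stands as a self-contained replacement for an external citation. The cyclic scheme (i)$\Rightarrow$(iii)$\Rightarrow$(ii)$\Rightarrow$(i) is sound, each implication is exactly the short contradiction you describe, the properness bookkeeping (non-emptiness of the ideal gives $F\neq P$; non-emptiness of the filter gives $I\neq P$) is handled correctly, and the concluding bijection does follow from the involution $A\mapsto P\setminus A$ together with the order-dual of the equivalence, which is legitimate since the whole argument is purely order-theoretic and can be applied to $(P,\geq)$.

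One point you flagged deserves emphasis, because it is not merely cosmetic. The paper's displayed definition of a prime filter literally reads ``$\{x,y\}\cap F\neq\emptyset$ for all $x,y\in P$ with $L(x,y)\subseteq F$'', i.e.\ with a lower cone; this is evidently a slip copied from the prime-ideal definition. Your decision to read it as the true order dual, with $U(x,y)\subseteq F$, is what makes the lemma true: under the literal reading the implication (iii)$\Rightarrow$(ii) fails for unbounded posets. For example, take $P=\{a,b,c,d\}$ with $a<c<d$, $b<c$, and $a,b$ incomparable. Then $I=L(c)=\{a,b,c\}$ is a prime ideal and $F=P\setminus I=\{d\}$ is a filter, yet $L(a,b)=\emptyset\subseteq F$ while $\{a,b\}\cap F=\emptyset$, so $F$ would not be ``prime'' in the literal sense. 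So your reading is forced, and with it your proof establishes the statement in full generality.
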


Recall that a poset $(P,\leq)$ satisfies the {\em Ascending Chain Condition} if it has no infinite ascending chains. The {\em Descending Chain Condition} is defined dually.

\begin{lemma}\label{lem3}
{\rm(\cite{CL})} Let $\mathbf P=(P,\leq)$ be a poset. Then the following hold
\begin{enumerate}[{\rm(i)}]
\item Every ideal of $\mathbf P$ is principal if and only if $\mathbf P$ satisfies the Ascending Chain Condition,
\item every filter of $\mathbf P$ is principal if and only if $\mathbf P$ satisfies the Descending Chain Condition.
\end{enumerate}
\end{lemma}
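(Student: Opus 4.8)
The plan is to prove part (i) in full and to obtain part (ii) by the order-dual argument, since the excerpt already notes that the concepts of ideal and filter are dual to each other, and the Descending Chain Condition is defined as the dual of the Ascending Chain Condition. Within (i) I would establish the two implications separately.

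For the direction ``ACC implies every ideal is principal,'' I would first recall the standard equivalence that a poset satisfies the Ascending Chain Condition if and only if every non-empty subset has a maximal element; the nontrivial half of this is proved by contraposition, using a subset without maximal elements to build recursively a strictly increasing infinite chain. Now let $I$ be an ideal, which is non-empty by definition, and pick a maximal element $a$ of $I$. Then $L(a)\subseteq I$ because $a\in I$ and $I$ is an ideal. Conversely, for any $x\in I$ the ideal property applied to $x,a\in I$ yields some $c\in U(x,a)\cap I$; since $c\geq a$ and $c\in I$, maximality of $a$ forces $c=a$, whence $x\leq a$, i.e.\ $x\in L(a)$. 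Thus $I=L(a)$ is principal.

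For the converse I would argue by contraposition: assuming ACC fails, I produce a non-principal ideal. Fix an infinite strictly ascending chain $a_1<a_2<a_3<\cdots$ and set $I:=\bigcup_{n\geq1}L(a_n)$. I would then verify that $I$ is an ideal: if $x\in I$ then $x\leq a_n$ for some $n$, so $L(x)\subseteq L(a_n)\subseteq I$; and if $x,y\in I$ with $x\leq a_m$ and $y\leq a_n$, then for $k=\max(m,n)$ we have $a_k\in U(x,y)\cap I$. Finally, $I$ cannot be principal: if $I=L(a)$, then $a\in I$ gives $a\leq a_n$ for some $n$, while $a_{n+1}\in L(a_{n+1})\subseteq I=L(a)$ gives $a_{n+1}\leq a$; combining these, $a_{n+1}\leq a\leq a_n<a_{n+1}$, a contradiction.

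I do not expect any single step to be a serious obstacle, as the arguments are elementary once the right objects are chosen. The only point requiring a little care is the equivalence between ACC and the existence of maximal elements used in the forward direction, which implicitly invokes dependent choice in the recursive construction of the ascending chain; this is standard but worth flagging. Part (ii) then follows verbatim by replacing $\leq$ with $\geq$, $L$ with $U$, ``ideal'' with ``filter,'' and ``Ascending'' with ``Descending'' throughout.
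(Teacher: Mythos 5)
Your proposal is correct. Note, however, that the paper itself offers no proof of this lemma: it is quoted with the citation \cite{CL} (``Filters and ideals in pseudocomplemented posets''), so there is no internal argument to compare against. Your proof is the standard one and is sound with respect to the paper's definitions: you correctly use that an ideal is by definition non-empty, downward closed and up-directed, so a maximal element of an ideal (which exists under the Ascending Chain Condition) is forced by directedness to be its maximum, giving $I=L(a)$; and in the converse direction the union $\bigcup_{n\geq 1}L(a_n)$ over a strictly ascending chain is indeed a non-principal ideal. Your flagging of dependent choice in the equivalence ``ACC $\Leftrightarrow$ every non-empty subset has a maximal element'' is appropriate, and the dualization to part (ii) is legitimate since both the ideal/filter definitions and the two chain conditions are exact order duals.
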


Note that if $P$ is finite then $(P,\leq)$ satisfies the Ascending Chain Condition as well as the Descending Chain Condition.

\begin{lemma}\label{lem1}
Let $\mathbf P=(P,\leq,{}',0,1)$ be a complemented poset, $a\in P$ and $I$ a proper ideal of $\mathbf P$. Then either $a\notin I$ or $a'\notin I$.
\end{lemma}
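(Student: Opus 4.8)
The plan is to argue by contradiction. Suppose the conclusion fails, so that both $a\in I$ and $a'\in I$. Since $I$ is an ideal, the defining condition applied to the pair $a,a'\in I$ yields $U(a,a')\cap I\neq\emptyset$. The whole argument then rests on identifying the upper cone $U(a,a')$.

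The key step is the observation that $U(a,a')=\{1\}$. By the definition of a complementation, $a\vee a'$ exists and equals $1$; in other words, $1$ is the least upper bound of $\{a,a'\}$, so in particular $1\in U(a,a')$. Conversely, since $1$ is the top element of $P$, any $x\in U(a,a')$ satisfies $1\le x$, forcing $x=1$. Hence $U(a,a')=\{1\}$.

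Combining these, $\{1\}\cap I=U(a,a')\cap I\neq\emptyset$, so $1\in I$. But it was noted above that an ideal $I$ is proper precisely when $1\notin I$, and $I$ is assumed proper; this is a contradiction. Therefore the assumption $a\in I$ and $a'\in I$ is untenable, and either $a\notin I$ or $a'\notin I$, as claimed.

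I do not anticipate any real obstacle here: the only point requiring a moment's care is the identification $U(a,a')=\{1\}$, which is immediate from the fact that the supremum $a\vee a'=1$ coincides with the top element. Everything else is a direct unwinding of the definitions of ideal and of proper ideal.
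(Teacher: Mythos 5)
Your proof is correct and takes essentially the same route as the paper's: assume $a,a'\in I$, use the ideal property to get $\{1\}\cap I=U(a,a')\cap I\neq\emptyset$, hence $1\in I$, contradicting properness of $I$. One tiny wording slip worth noting: for $x\in U(a,a')$ the inequality $1\leq x$ follows from $1=a\vee a'$ being the \emph{least} upper bound of $\{a,a'\}$, not from $1$ being the top element (that gives $x\leq1$); both facts are at hand in your argument, so the identification $U(a,a')=\{1\}$ stands.
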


\begin{proof}
If $a,a'\in I$ then $\{1\}\cap I=U(a,a')\cap I\neq\emptyset$, i.e.\ $1\in I$ which implies $I=P$ contradicting the assumption of $I$ being a proper ideal of $\mathbf P$.
\end{proof}

In the following we demonstrate the role of $I_0$.

\begin{proposition}\label{prop1}
Let $\mathbf P=(P,\leq,{}',0,1)$ be a complemented poset and $I$ an ideal and $F$ a filter of $\mathbf P$. Then the following are equivalent:
\begin{enumerate}[{\rm(i)}]
\item $I$ is proper,
\item $I_0\neq P$,
\item $I\cap I_0=\emptyset$.
\end{enumerate}
Moreover, the following are equivalent:
\begin{enumerate}
\item[{\rm(iv)}] $F$ is proper,
\item[{\rm(v)}] $F_0\neq P$,
\item[{\rm(vi)}] $F\cap F_0=\emptyset$.
\end{enumerate}
\end{proposition}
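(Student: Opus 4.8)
The plan is to establish the ideal equivalences (i)--(iii) by a short cyclic chain of implications and then to deduce the filter equivalences (iv)--(vi) by duality. Two facts recorded in the text just before the statement will do most of the work: every ideal contains $0$, and an ideal $I$ is proper precisely when $1\notin I$ (equivalently, $I=P$ if and only if $1\in I$). The only nontrivial tool is Lemma~\ref{lem1}, which forbids a proper ideal from containing both $a$ and $a'$.

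For (i)$\Rightarrow$(iii), I would suppose $I$ is proper and assume for contradiction that some $x$ lies in $I\cap I_0$. By the definition of $I_0$ this says $x\in I$ and $x'\in I$ simultaneously, exactly the situation ruled out by Lemma~\ref{lem1}; hence $I\cap I_0=\emptyset$. For (iii)$\Rightarrow$(ii), I use that $0\in I$: were $I_0$ equal to $P$, then $0\in I_0$ as well, so $0\in I\cap I_0$, contradicting (iii), whence $I_0\neq P$. Finally (ii)$\Rightarrow$(i) is cleanest in contrapositive form, since if $I$ is not proper then $I=P$, and then $x'\in P=I$ for every $x\in P$ gives $I_0=P$, i.e.\ (ii) fails. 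This closes the cycle and makes all three statements equivalent.

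The filter statements follow by dualizing each step. The dual of Lemma~\ref{lem1} asserts that a proper filter cannot contain both $a$ and $a'$, for then $\{0\}\cap F=L(a,a')\cap F\neq\emptyset$ would give $F=P$; combined with the dual basic facts ($1\in F$ for every filter, and $F=P$ if and only if $0\in F$), the same argument with the roles of $0$ and $1$ interchanged yields (iv)$\Rightarrow$(vi)$\Rightarrow$(v)$\Rightarrow$(iv). I do not expect a genuine obstacle here; the only point needing care is the bookkeeping of which bound sits in which set, namely that $0\in I$ always holds whereas $1\in I_0$ always holds, and that $I_0=P$ is precisely what forces $1\in I$ and hence $I=P$.
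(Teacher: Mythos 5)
Your proof is correct and takes essentially the same route as the paper: a cyclic chain of implications among (i)--(iii) resting on the same three facts (a proper ideal cannot contain both $x$ and $x'$, every ideal contains $0$, and $I=P$ if and only if $1\in I$), with the filter statements obtained by duality. The only cosmetic differences are that you run the cycle in the opposite order, (i)$\Rightarrow$(iii)$\Rightarrow$(ii)$\Rightarrow$(i) instead of the paper's (i)$\Rightarrow$(ii)$\Rightarrow$(iii)$\Rightarrow$(i), and that you cite Lemma~\ref{lem1} directly where the paper re-derives its content inline.
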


\begin{proof}
$\text{}$ \\
(i) $\Rightarrow$ (ii): \\
$I_0=P$ would imply $0\in I_0$ and hence $1=0'\in I$, i.e.\ $I=P$, a contradiction. \\
(ii) $\Rightarrow$ (iii): \\
Suppose, $I\cap I_0\neq\emptyset$. Then there exists some $a\in I\cap I_0$. Hence $a,a'\in I$. Since $I$ is an ideal of $\mathbf P$ we conclude $\{1\}\cap I=U(b,b')\cap I\neq\emptyset$, i.e.\ $1\in I$ and therefore $I=P$ whence $I_0=P$, a contradiction. \\
(iii) $\Rightarrow$ (i): \\
$I=P$ would imply $I\cap I_0=P\cap P=P\neq\emptyset$, a contradiction. \\
The proof for filters follows by duality.
\end{proof}

\begin{lemma}\label{lem4}
Let $\mathbf P=(P,\leq,{}',0,1)$ be a complemented poset and $I$ a c-ideal and $F$ a c-filter of $\mathbf P$. Then the following hold:
\begin{enumerate}[{\rm(i)}]
\item Assume $x'\leq x'''$ for all $x\in P$. Then $I''\subseteq I$.
\item Assume $x'''\leq x'$ for all $x\in P$. Then $F''\subseteq F$.
\end{enumerate}
\end{lemma}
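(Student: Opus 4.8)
The plan is to unwind the definitions until the statement reduces to a one-line order argument, and then to obtain part (ii) by dualizing part (i). The first observation is that, by the definition of $A'$, we have $I''=(I')'=\{a''\mid a\in I\}$, so proving $I''\subseteq I$ amounts to showing that $a''\in I$ for every $a\in I$. I would fix such an $a$ and aim to conclude $a''\in I$.

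For (i), I would exploit that $I$ is a c-ideal: by definition there is a filter $F$ of $\mathbf P$ with $F_0=I$. The whole proof hinges on reading membership through the description $F_0=\{x\in P\mid x'\in F\}$. Thus $a\in I=F_0$ says precisely that $a'\in F$, and the desired conclusion $a''\in I=F_0$ says precisely that $(a'')'=a'''\in F$. So the task becomes: from $a'\in F$ deduce $a'''\in F$. Here the hypothesis $x'\leq x'''$ (applied to $x=a$) gives $a'\leq a'''$, i.e.\ $a'''\in U(a')$. Since $F$ is a filter and $a'\in F$, the upward closure $U(a')\subseteq F$ yields $a'''\in F$, which is exactly what is needed. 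Hence $a''\in F_0=I$, completing (i).

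For (ii), the argument is the order-theoretic dual. Given $b\in F$ with $F$ a c-filter, there is an ideal $I$ with $I_0=F$; then $b\in F=I_0$ means $b'\in I$, and I want $b'''\in I$ (which is the same as $b''\in I_0=F$). The hypothesis $x'''\leq x'$ now gives $b'''\leq b'$, i.e.\ $b'''\in L(b')$, and since $I$ is an ideal containing $b'$ we have $L(b')\subseteq I$, so $b'''\in I$ and therefore $b''\in F$.

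I do not anticipate a genuine obstacle: the content is entirely in correctly translating the two conditions $a\in F_0\Leftrightarrow a'\in F$ and $b\in I_0\Leftrightarrow b'\in I$, and then recognizing that the chain hypothesis $x'\leq x'''$ (respectively $x'''\leq x'$) feeds exactly into the upward closure of a filter (respectively the downward closure of an ideal). The only point requiring care is bookkeeping of the three primes, making sure that ``$a''\in I$'' is expressed as ``$a'''\in F$'' and not confused with ``$a'\in F$''; once the membership equivalences are written out explicitly, each half is immediate.
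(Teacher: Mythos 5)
Your proof is correct and is essentially the same as the paper's: the paper also takes the filter $F$ with $F_0=I$ and runs the chain $x\in I$, $x'\in F$, $x'''\in F$ (by $x'\leq x'''$ and upward closure of $F$), $x''\in F_0=I$, handling (ii) by duality. Your version merely spells out the dual case explicitly, which is fine.
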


\begin{proof}
\
\begin{enumerate}[(i)]
\item Since $I$ is a c-ideal of $\mathbf P$ there exists some filter $F$ of $\mathbf P$ with $F_0=I$ and everyone of the following statements implies the next one: $x\in I$, $x\in F_0$, $x'\in F$, $x'''\in F$, $x''\in F_0$, $x''\in I$.
\item follows by duality.
\end{enumerate}
\end{proof}

\begin{example}\label{ex1}
Consider the complemented poset $\mathbf P=(P,\leq,{}',0,1)$ shown in Fig.~1 and the table for its complementation:

\vspace*{-4mm}

\begin{center}
\setlength{\unitlength}{7mm}
\begin{picture}(12,8)
\put(6,2){\circle*{.3}}
\put(6,4){\circle*{.3}}
\put(6,6){\circle*{.3}}
\put(4,4){\circle*{.3}}
\put(8,4){\circle*{.3}}
\put(6,2){\line(-1,1)2}
\put(6,2){\line(1,1)2}
\put(6,2){\line(0,1)4}
\put(4,4){\line(1,1)2}
\put(8,4){\line(-1,1)2}
\put(5.875,1.25){$0$}
\put(3.35,3.85){$a$}
\put(5.3,3.85){$b$}
\put(8.35,3.85){$c$}
\put(5.875,6.35){$1$}
\put(5.3,.3){{\rm Fig.~1}}
\put(4.2,-.5){bounded poset}
\end{picture}
\raisebox{15ex}{
\begin{tabular}{c|ccccc}
  $x$   & $0$ & $a$ & $b$ & $c$ & $1$ \\
  \hline
  $x'$  & $1$ & $b$ & $c$ & $b$ & $0$ \\
  \hline
  $x''$ & $0$ & $c$ & $b$ & $c$ & $1$ \\
  \hline
  $x'''$ & $1$ & $b$ & $c$ & $b$ & $0$ \\
\end{tabular}}
\end{center}

\vspace*{5mm}

We have

Boolean elements: $0$, $b$, $c$, $1$, \\
maximal ideals: $L(a)$, $L(b)$, $L(c)$ \\
ultrafilters: $U(a)$, $U(b)$, $U(c)$, \\
$\mathbf P$ has neither prime ideals nor prime filters, \\
c-ideals: $L(0)$, $L(b)$, $L(1)$, \\
c-filters: $U(0)$, $U(b)$, $U(1)$.

The complementation defined by the above table is antitone and satisfies the identity $x'''\approx x'$, but it is not an involution. We have
\begin{align*}
L(0)'' & =L(0), \\
L(a)'' & =L(c)\not\subseteq L(a), \\
L(b)'' & =L(b), \\
L(c)'' & =L(c), \\
L(1)'' & =\{0,b,c,1\}\subseteq L(1).
\end{align*}
\end{example}

\begin{lemma}\label{lem5}
Let $(P,\leq,{}',0,1)$ be a complemented poset satisfying $x'''\approx x'$ and let $a\in P$ and $A\subseteq P$. Then $a\in A_0$ if and only if $a''\in A_0$.
\end{lemma}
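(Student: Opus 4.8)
The plan is to unfold the definition of the operator ${}_0$ and then apply the given identity directly; no structural argument about ideals or filters is needed. Recall that by definition $A_0=\{x\in P\mid x'\in A\}$, so whether an element lies in $A_0$ depends only on where its image under ${}'$ lands.

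First I would record the two relevant instances of this definition. Applying it to $a$ gives that $a\in A_0$ holds if and only if $a'\in A$. Applying it instead to the element $a''$ (in place of $a$) gives that $a''\in A_0$ holds if and only if $(a'')'=a'''\in A$. At this point the two membership conditions have been reduced to $a'\in A$ and $a'''\in A$, respectively.

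The key step is to invoke the hypothesis $x'''\approx x'$, instantiated at $x=a$, which yields $a'''=a'$. Hence the set $A$ contains $a'''$ exactly when it contains $a'$, and chaining the equivalences produces $a\in A_0 \iff a'\in A \iff a'''\in A \iff a''\in A_0$, which is precisely the claim.

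I do not expect any real obstacle: the statement follows immediately from the definition of $A_0$ combined with $x'''\approx x'$. The only point requiring care is bookkeeping of the primes, namely that $(a'')'$ must be read as $a'''$ (three applications of ${}'$) rather than $a'$, so that the identity is applied to the correct expression.
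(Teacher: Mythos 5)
Your proposal is correct and follows exactly the same chain of equivalences as the paper's own proof: $a\in A_0$, $a'\in A$, $a'''\in A$ (by the identity $x'''\approx x'$), $a''\in A_0$. Nothing to add.
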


\begin{proof}
The following are equivalent: $a\in A_0$, $a'\in A$, $a'''\in A$, $a''\in A_0$.
\end{proof}

If we suppose that the complementation is antitone then we can formulate easy assumptions ensuring that every ideal is a c-ideal and every filter is a c-filter.

\begin{theorem}\label{th1}
Let $\mathbf P=(P,\leq,{}',0,1)$ be a complemented poset with antitone complementation and $F$ a filter and $I$ an ideal of $\mathbf P$. Then the following hold:
\begin{enumerate}[{\rm(i)}]
\item Assume $x\leq x''$ for all $x\in P$. Then $F_0$ is a c-ideal of $\mathbf P$.
\item Assume $x''\leq x$ for all $x\in P$. Then $I_0$ is a c-filter of $\mathbf P$.
\end{enumerate}
\end{theorem}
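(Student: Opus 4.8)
The plan is to prove (i) and then obtain (ii) by the order-dual argument. The first and decisive observation is that, for $F_0$, the c-ideal condition comes essentially for free: a c-ideal is by definition an ideal $J$ for which \emph{some} filter $G$ of $\mathbf P$ satisfies $G_0=J$, and here the given filter $F$ already satisfies $F_0=F_0$. Hence the whole task reduces to checking that $F_0$ is an ideal of $\mathbf P$. Non-emptiness is immediate: since $F$ is a (non-empty) filter it contains $1$, and because $0'=1$ this gives $0\in F_0$.

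Next I would verify the two defining conditions of an ideal. For the down-set condition, I take $x\in F_0$ (so $x'\in F$) and $z\leq x$, and use antitonicity to get $x'\leq z'$; since $F$ is a filter and $x'\in F$, the upper cone $U(x')$ lies in $F$, so $z'\in F$, i.e.\ $z\in F_0$. This step uses antitonicity only, and establishes $L(x)\subseteq F_0$.

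The crucial step is the upper-cone condition $U(x,y)\cap F_0\neq\emptyset$ for $x,y\in F_0$. From $x',y'\in F$ and the filter property there is some $w\in L(x',y')\cap F$, and I claim that $w'$ is the required witness. On one hand, $w\leq x'$ and $w\leq y'$ give, by antitonicity, $x''\leq w'$ and $y''\leq w'$; combining these with the hypothesis $x\leq x''$ and $y\leq y''$ yields $x,y\leq w'$, that is, $w'\in U(x,y)$. On the other hand, $w\in F$ together with $w\leq w''$ (again the hypothesis) forces $w''\in U(w)\subseteq F$, which is exactly $(w')'\in F$, i.e.\ $w'\in F_0$. Thus $w'\in U(x,y)\cap F_0$, so $F_0$ is an ideal and hence a c-ideal.

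I expect this last step to be the main obstacle: the work lies in producing a single element lying simultaneously in $U(x,y)$ and in $F_0$, and it is here that both hypotheses are genuinely used — antitonicity to pass from $w\leq x'$ to $x''\leq w'$, and $x\leq x''$ both to conclude $x\leq w'$ and to promote the membership $w\in F$ to $w''\in F$. Finally, (ii) follows by the dual argument, interchanging the roles of ideals and filters, of lower and upper cones, and replacing the hypothesis $x\leq x''$ by $x''\leq x$ throughout.
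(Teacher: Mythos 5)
Your proof is correct and follows essentially the same route as the paper: the same witness $w'$ (the paper's $c'$) for the upper-cone condition, the same use of antitonicity for the down-set condition, and the same observation that $F$ itself certifies $F_0$ as a c-ideal, with (ii) obtained by duality.
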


\begin{proof}
\
\begin{enumerate}[(i)]
\item Let $a,b\in F_0$. Since $0'=1\in F$, we have $0\in F_0$ and hence $F_0\neq\emptyset$. Because of $a',b'\in F$ and $F$ is a filter of $\mathbf P$ there exists some $c\in L(a',b')\cap F$. Since $c\leq c''$ we have $c''\in F$, i.e.\ $c'\in F_0$. Together $c'\in U(a'',b'')\cap F_0\subseteq U(a,b)\cap F_0$ proving $U(a,b)\cap F_0\neq\emptyset$. If $d\in P$, $e\in F_0$ and $d\leq e$ then $e'\in F$ and $e'\leq d'$ and hence $d'\in F$, i.e.\ $d\in F_0$. Altogether, $F_0$ is an ideal and $F$ a filter of $\mathbf P$ and hence $F_0$ is a c-ideal of $\mathbf P$.
\item follows by duality.
\end{enumerate}
\end{proof}

If we assume that the complementation is an antitone involution, which is a rather strong assumption, we can state the following result.

\begin{corollary}\label{cor2}
Let $\mathbf P=(P,\leq,{}',0,1)$ be a complemented poset with an antitone involution and $I$ an ideal and $F$ a filter of $\mathbf P$. Then
\begin{enumerate}[{\rm(i)}]
\item $I_0$ is a filter of $\mathbf P$, $I=(I_0)_0$ and hence $I$ is a c-ideal of $\mathbf P$,
\item $F_0$ is an ideal of $\mathbf P$, $F=(F_0)_0$ and hence $F$ is a c-filter of $\mathbf P$.
\end{enumerate}
\end{corollary}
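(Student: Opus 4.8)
The plan is to derive everything from Theorem~\ref{th1} together with the defining identity $x''\approx x$ of an involution, proving part~(i) in detail and obtaining part~(ii) by duality. The key observation is that an antitone involution satisfies $x\leq x''$ \emph{and} $x''\leq x$ for all $x\in P$ (both, in fact, with equality), so that the hypotheses of both clauses of Theorem~\ref{th1} are met simultaneously.

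For part~(i), I would first note that since $x''\leq x$ holds for all $x\in P$, Theorem~\ref{th1}(ii) immediately yields that $I_0$ is a c-filter of $\mathbf P$; in particular $I_0$ is a filter of $\mathbf P$. Next I would compute $(I_0)_0$ straight from the definition of the operator: $x\in(I_0)_0$ iff $x'\in I_0$ iff $x''\in I$, and since $x''=x$ by the involution this is equivalent to $x\in I$, so $(I_0)_0=I$. Finally, taking $F:=I_0$ — which is a filter by the first step — and invoking $F_0=(I_0)_0=I$, the definition of a c-ideal is satisfied, whence $I$ is a c-ideal of $\mathbf P$.

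Part~(ii) then follows by the duality between ideals and filters already recorded in the text, using Theorem~\ref{th1}(i) in place of (ii) and the analogous identity $F=(F_0)_0$ supplied by the involution. I do not expect any genuine obstacle: the whole argument rests on the two inequalities handed to us for free by the involution, which unlock both halves of Theorem~\ref{th1}, and on the single cancellation $x''=x$, which collapses $(I_0)_0$ back to $I$. The only point worth stating with a little care is that a c-filter is in particular a filter, so that $I_0$ legitimately serves as the witnessing filter $F$ in the definition of a c-ideal.
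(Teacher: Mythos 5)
Your proposal is correct and follows essentially the same route as the paper: both use Theorem~\ref{th1}(ii) to see that $I_0$ is a filter, then the involution $x''=x$ to compute $(I_0)_0=I$, and part~(ii) by duality. The only cosmetic difference is at the end, where you verify the definition of a c-ideal directly with the witness $F:=I_0$, while the paper reaches the same conclusion by citing Theorem~\ref{th1}(i); these amount to the same argument.
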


\begin{proof}
\
\begin{enumerate}[(i)]
\item From (ii) of Theorem~\ref{th1} we obtain that $I_0$ is a filter of $\mathbf P$. Moreover, for $x\in P$ the following are equivalent: $x\in(I_0)_0$, $x'\in I_0$, $x''\in I$, $x\in I$. This shows $(I_0)_0=I$. By (i) of Theorem~\ref{th1} we conclude that $I$ is a c-ideal of $\mathbf P$.
\item follows by duality.
\end{enumerate}
\end{proof}

\begin{remark}\label{rem1}
If $(P,\leq,{}',0,1)$ is a complemented poset whose complementation is an antitone involution and $a\in P$ then $L(a)_0=U(a')$ since the following are equivalent: $x\in L(a)_0$, $x'\in L(a)$, $x'\leq a$, $a'\leq x$, $x\in U(a')$. According to {\rm(ii)} of Corollary~\ref{cor2} we have $L(a)=U(a')_0$.
\end{remark}

Now we formulate a condition which will be helpful for the Separation Theorems.

\begin{definition}\label{def1}
A subset $A$ of a complemented poset $(P,\leq,{}',0,1)$ satisfies the {\em c-condition} if for every $x\in P$ the set $A$ contains exactly one of $x$ and $x'$.
\end{definition}

Recall from Lemma~\ref{lem1} that for a proper ideal $I$ of $\mathbf P$ and for arbitrary $x\in P$ the situation $x,x'\in I$ is impossible.

The following lemma shows that ideals satisfying c-condition can be found among prime ideals.

\begin{lemma}\label{lem2}
Let $\mathbf P=(P,\leq,{}',0,1)$ be a complemented poset and $I$ a prime ideal and $F$ a prime filter of $\mathbf P$. Then $I$ and $F$ satisfy the c-condition.
\end{lemma}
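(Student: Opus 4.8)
The plan is to check the two halves of the c-condition separately for the ideal $I$: that $I$ contains \emph{at least} one of $x,x'$ for each $x\in P$, and that it contains \emph{at most} one; together these say ``exactly one''. The statement for the prime filter $F$ will then follow by the duality between ideals and filters noted above.

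For the ``at most one'' half I would simply invoke properness. A prime ideal satisfies $I\neq P$, so Lemma~\ref{lem1} applies and shows that $x$ and $x'$ cannot both lie in $I$. Dually, a prime filter is proper, and the dual of Lemma~\ref{lem1} excludes $x,x'\in F$ simultaneously.

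The ``at least one'' half is where primeness is used, and the decisive move is to apply the prime condition to the pair $(x,x')$ itself. Since ${}'$ is a complementation, $x\wedge x'=0$, which means exactly that $\inf\{x,x'\}$ exists and equals $0$, i.e.\ $L(x,x')=L(0)=\{0\}$; as $0$ belongs to every ideal, this gives $L(x,x')\subseteq I$, and the definition of a prime ideal then yields $\{x,x'\}\cap I\neq\emptyset$. Combined with the previous paragraph, exactly one of $x,x'$ lies in $I$, so $I$ satisfies the c-condition. For $F$ I would argue dually, using $x\vee x'=1$ to get $U(x,x')=U(1)=\{1\}\subseteq F$ and then the prime-filter condition. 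I expect no genuine obstacle here, since the argument is short; the only step needing a little care is the translation of the identities $x\wedge x'=0$ and $x\vee x'=1$ into the cone equalities $L(x,x')=\{0\}$ and $U(x,x')=\{1\}$, as it is precisely these that make $(x,x')$ an admissible pair in the prime condition.
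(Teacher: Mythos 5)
Your proof is correct and follows essentially the same route as the paper: the paper likewise applies the prime condition to the pair $(a,a')$ via $L(a,a')=\{0\}\subseteq I$ and handles $F$ by duality, with the ``at most one'' half covered by Lemma~\ref{lem1} (which the paper recalls just before stating the lemma rather than inside the proof). Your write-up merely makes that properness step explicit.
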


\begin{proof}
If $a\in P$ then $L(a,a')=\{0\}\subseteq I$ and hence $\{a,a'\}\cap I\neq\emptyset$. The rest follows by duality.
\end{proof}

Recall from {\rm\cite{LR}} that a {\em poset} $(P,\leq)$ is called {\em distributive} if it satisfies one of the following equivalent conditions:

$L\big(U(x,y),z\big)=LU\big(L(x,z),L(y,z)\big)$ for all $x,y,z\in P$, \\
$U\big(L(x,y),z\big)=UL\big(U(x,z),U(y,z)\big)$ for all $x,y,z\in P$.

The next result illuminates the role of distributivity of the poset $\mathbf P$ for the c-condition both for ideals and filters of $\mathbf P$.

\begin{theorem}\label{th5}
Let $\mathbf P=(P,\leq,{}',0,1)$ be a complemented poset and $I$ an ideal and $F$ a filter of $\mathbf P$. Consider the following statements:
\begin{enumerate}[{\rm(i)}]
\item $I$ satisfies the c-condition,
\item $I$ is a maximal ideal of $\mathbf P$,
\item $(P,\leq)$ is distributive,
\item $\bigcup\{LU(a,i)\mid i\in I\}$ is an ideal of $\mathbf P$ for all $a\in P\setminus I$,
\item $F$ satisfies the c-condition,
\item $F$ is an ultrafilter of $\mathbf P$,
\item $\bigcup\{UL(a,f)\mid f\in F\}$ is a filter of $\mathbf P$ for all $a\in P\setminus F$.
\end{enumerate}
Then \\
{\rm(i)} implies {\rm(ii)}, \\
{\rm(ii)}, {\rm(iii)} and {\rm(iv)} imply {\rm(i)}, \\
{\rm(v)} implies {\rm(vi)}, \\
{\rm(iii)}, {\rm(vi)} and {\rm(vii)} imply {\rm(v)}.
\end{theorem}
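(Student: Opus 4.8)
The plan is to prove the two implications concerning the ideal $I$ (namely (i)$\Rightarrow$(ii) and (ii)$\wedge$(iii)$\wedge$(iv)$\Rightarrow$(i)) and then obtain the two filter statements by the order-duality that exchanges $L$ with $U$, $0$ with $1$, ideals with filters and maximal ideals with ultrafilters, while leaving the complementation, the c-condition and distributivity invariant. Everything rests on the two identities $U(a,a')=\{1\}$ and $L(a,a')=\{0\}$ coming from $a\vee a'=1$ and $a\wedge a'=0$.

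For (i)$\Rightarrow$(ii) I would first note that the c-condition forces $I$ to be proper, since $I=P$ would put both $x$ and $x'$ in $I$ for every $x$. For maximality, take any ideal $J\supsetneq I$ and pick $a\in J\setminus I$; the c-condition then yields $a'\in I\subseteq J$, so $a,a'\in J$. Since $J$ is an ideal, $U(a,a')\cap J\neq\emptyset$, and because $U(a,a')=\{1\}$ this gives $1\in J$, hence $J=P$. Thus no proper ideal strictly contains $I$, so $I$ is maximal.

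For (ii)$\wedge$(iii)$\wedge$(iv)$\Rightarrow$(i), Lemma~\ref{lem1} already tells us that the proper ideal $I$ contains \emph{at most} one of $x,x'$, so it remains to show it contains at least one. I would argue by contradiction, assuming $a\notin I$ and $a'\notin I$ for some $a\in P$. Applying (iv) to $a$, the set $J_a:=\bigcup\{LU(a,i)\mid i\in I\}$ is an ideal; since $i\in LU(a,i)$ and $a\in LU(a,i)$ we get $I\subsetneq J_a$, so maximality forces $J_a=P$. Hence $1\in LU(a,i)$ for some $i\in I$, i.e.\ $U(a,i)=\{1\}$; the same argument for $a'$ produces $j\in I$ with $U(a',j)=\{1\}$. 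Choosing $k\in U(i,j)\cap I$ (which exists as $I$ is an ideal) and using $U(a,k)\subseteq U(a,i)$ and $U(a',k)\subseteq U(a',j)$, I collapse the two witnesses into a single $k\in I$ satisfying $U(a,k)=U(a',k)=\{1\}$.

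The crux is the distributive computation. Using the second form of distributivity with $x=a$, $y=a'$, $z=k$, together with $L(a,a')=\{0\}$ and $U(a,k)=U(a',k)=\{1\}$, the law
\[
U\big(L(a,a'),k\big)=UL\big(U(a,k),U(a',k)\big)
\]
reduces on the left to $U(k)$ and on the right to $U\big(L(1)\big)=U(P)=\{1\}$, giving $U(k)=\{1\}$. Since $k\in U(k)$, this forces $k=1\in I$, contradicting the properness of $I$. The resulting contradiction shows that $I$ meets every pair $\{x,x'\}$, which together with Lemma~\ref{lem1} establishes the c-condition. I expect the main obstacle to be spotting the correct instance of the distributive law and realizing that the two separate witnesses $i,j$ must be merged into one $k$ before it can be applied; the remaining manipulations are routine bookkeeping with the cone operators.
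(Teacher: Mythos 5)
Your proposal is correct. The first implication (i)$\Rightarrow$(ii) and the use of duality for the filter statements coincide with the paper's proof, and your overall skeleton for (ii)$\wedge$(iii)$\wedge$(iv)$\Rightarrow$(i) is also the paper's: apply (iv) to build $K:=\bigcup\{LU(a,i)\mid i\in I\}$, use maximality to get $K=P$, extract a witness $i\in I$ with $U(a,i)=\{1\}$, and feed the instance $U\big(L(a,a'),\cdot\big)=UL\big(U(a,\cdot),U(a',\cdot)\big)$ of distributivity with $L(a,a')=\{0\}$. Where you diverge is the endgame: you argue by contradiction, which forces you to invoke (iv) a second time for $a'$, produce a second witness $j$, and merge $i,j$ into a single $k\in U(i,j)\cap I$ before distributivity collapses $U(k)$ to $\{1\}$ and yields the contradiction $k=1\in I$. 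The paper avoids all of this by arguing directly: from the single witness $i$ it computes
\[
i\in U(i)=U\big(L(a,a'),i\big)=UL\big(U(a,i),U(a',i)\big)=ULU(a',i)=U(a',i)\subseteq U(a'),
\]
so $a'\leq i$ and hence $a'\in I$ by downward closure; combined with Lemma~\ref{lem1} this gives the c-condition with no case analysis. Your version is sound (the merging step via $U(i,j)\cap I\neq\emptyset$ is exactly right, and maximal ideals are proper by definition, so $k=1\in I$ is indeed absurd), but the direct computation is shorter and needs only one application of hypothesis (iv); it is worth noticing that when distributivity hands you $ULU(a',i)=U(a',i)$, you can read off $a'\leq i$ without ever assuming $a'\notin I$.
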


\begin{proof}
$\text{}$ \\
(i) $\Rightarrow$ (ii): \\
Since $I$ satisfies the c-condition, $I\neq P$. Let $J$ be an ideal of $\mathbf P$ strictly including $I$. Then there exists some $a\in J\setminus I$. Because of (i) we conclude $a'\in I$ which implies $a'\in J$. Since $J$ is an ideal of $\mathbf P$ we have $\{1\}\cap J=U(a,a')\cap J\neq\emptyset$ and hence $1\in J$ which implies $J=P$. \\
$\big($(ii), (iii) and (iv)$\big)$ $\Rightarrow$ (i): \\
Let $a\in P\setminus I$. Because of (iv), $K:=\bigcup\{LU(a,i)\mid i\in I\}$ is an ideal of $\mathbf P$ including $I\cup\{a\}$ and hence strictly including $I$. Since $I$ is a maximal ideal of $\mathbf P$ we conclude $K=P$. Hence $1\in K$ and therefore there exists some $i\in I$ with $1\in LU(a,i)$. This means $U(a,i)=\{1\}$. Using (iii) we have
\begin{align*}
i\in U(i) & =U(0,i)=U\big(L(a,a'),i\big)=UL\big(U(a,i),U(a',i)\big)=UL\big(1,U(a',i)\big)= \\
          & =ULU(a',i)=U(a',i)\subseteq U(a'),
\end{align*}
i.e.\ $a'\leq i$. Since $i\in I$ and $I$ is an ideal of $\mathbf P$ we conclude $a'\in I$. The rest follows by duality.
\end{proof}

The following lemma shows that condition (iv) of Theorem~\ref{th5} is satisfied automatically if the poset $\mathbf P$ in question is a join-semilattice. Dually, condition (vii) of Theorem~\ref{th5} holds if $\mathbf P$ is a meet-semilattice.

\begin{lemma}
Let $\mathbf P=(P,\leq)$ be a join-semilattice, $I$ an ideal of $\mathbf P$ and $a\in P$. Then $\bigcup\{LU(a,i)\mid i\in I\}$ is an ideal of $\mathbf P$.
\end{lemma}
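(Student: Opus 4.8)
The plan is to first simplify the set in question using the semilattice structure, and then verify the two defining properties of an ideal (downward closure and the upper-cone condition) directly.

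The key preliminary observation is that in a join-semilattice the join $a\vee i$ exists for each $i\in I$, so that $U(a,i)=U(a\vee i)$ and consequently $LU(a,i)=L(a\vee i)$ (the lower cone of the upper cone of a single element $c$ is just $L(c)$, since $c$ is the least element of $U(c)$). Hence the set under consideration is $K:=\bigcup\{L(a\vee i)\mid i\in I\}$, a union of principal ideals. Reformulating $K$ in this way is the step that makes everything transparent, so I would record it first.

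Next I would check nonemptiness and downward closure, which are routine. Since $I$ is by definition nonempty, I can fix some $i_0\in I$; then $a\leq a\vee i_0$ gives $a\in L(a\vee i_0)\subseteq K$, so $K\neq\emptyset$. For downward closure, if $x\in K$ then $x\leq a\vee i$ for some $i\in I$, and any $y\leq x$ satisfies $y\leq a\vee i$, whence $y\in L(a\vee i)\subseteq K$; thus $L(x)\subseteq K$ for every $x\in K$.

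The one step requiring the ideal hypothesis on $I$ — and the part I would flag as the main (though still mild) obstacle — is the upper-cone condition. Given $x,y\in K$, choose witnesses $i,j\in I$ with $x\leq a\vee i$ and $y\leq a\vee j$. Because $I$ is an ideal, $U(i,j)\cap I\neq\emptyset$, so I can pick $k\in U(i,j)\cap I$, i.e.\ $k\in I$ with $i,j\leq k$. Monotonicity of the join then yields $a\vee i\leq a\vee k$ and $a\vee j\leq a\vee k$, so $a\vee k\in U(x,y)$; and since $k\in I$ we also have $a\vee k\in L(a\vee k)\subseteq K$. Therefore $a\vee k\in U(x,y)\cap K$, which shows $U(x,y)\cap K\neq\emptyset$. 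The essential idea here is that the single element $a\vee k$ simultaneously dominates both $a\vee i$ and $a\vee j$, allowing two witnesses to be merged into one; combining this with the directedness of $I$ inside its upper cones closes the argument, and $K$ is an ideal of $\mathbf P$.
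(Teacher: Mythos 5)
Your proof is correct and takes essentially the same route as the paper's: both arguments hinge on the reformulation $\bigcup\{LU(a,i)\mid i\in I\}=\bigcup\{L(a\vee i)\mid i\in I\}$ and then verify downward closure and the upper-cone condition directly. The only minor difference is in the witness for directedness: the paper first shows $I$ is closed under binary joins and uses $b\vee c\in U(b,c)\cap L\big(a\vee(j\vee k)\big)$, whereas you invoke the defining directedness of $I$ to pick $k\in U(i,j)\cap I$ and use $a\vee k$ as the witness, which lets you skip that preliminary step.
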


\begin{proof}
If $b,c\in I$ then there exists some $d\in U(b,c)\cap I$ whence $b\vee c\leq d\in I$ and hence $b\vee c\in I$. Put $J:=\bigcup\{LU(a,i)\mid i\in I\}$. Then
\[
J=\bigcup\{LU(a\vee i)\mid i\in I\}=\bigcup\{L(a\vee i)\mid i\in I\}.
\]
If $b,c\in J$ then there exist $j,k\in I$ with $b\leq a\vee j$ and $c\leq a\vee k$ and hence
\[
b\vee c\in U(b,c)\cap L\big(a\vee(j\vee k)\big)\subseteq U(b,c)\cap J.
\]
Since $J$ is downward closed it is an ideal of $\mathbf P$.
\end{proof}

Now we are ready to prove our first Separation Theorem.

\begin{theorem}\label{th3}
{\rm(}{\bf First Separation Theorem}{\rm)} Let $\mathbf P=(P,\leq,{}',0,1)$ be a complemented poset with antitone complementation satisfying $x\leq x''$ for each $x\in P$ and let $I$ be an ideal and $F$ a filter of $\mathbf P$ satisfying the c-condition and $I\cap F=\emptyset$. Then there exists some c-ideal $J$ of $\mathbf P$ with $I\subseteq J$ and $J\cap F=\emptyset$.
\end{theorem}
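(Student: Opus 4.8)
The plan is to exhibit the separating c-ideal explicitly as $J:=F_0$, and the whole argument rests on a single observation: under the c-condition the set $F_0$ coincides with the set-theoretic complement $P\setminus F$.

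I would proceed in three steps. First, I invoke Theorem~\ref{th1}(i): since the complementation is antitone and satisfies $x\leq x''$ for all $x\in P$, that theorem applied to the filter $F$ yields immediately that $F_0$ is a c-ideal of $\mathbf P$. This disposes of the hardest structural requirement, namely that $J$ be a c-ideal, with no extra work. Second, I compute $F_0$ using the c-condition on $F$: for any $x\in P$ we have $x\in F_0$ iff $x'\in F$, and since $F$ contains exactly one of $x$ and $x'$, the membership $x'\in F$ is equivalent to $x\notin F$; hence $x\in F_0$ iff $x\notin F$, that is, $F_0=P\setminus F$. Third, I read off the two remaining claims from this identity: the hypothesis $I\cap F=\emptyset$ gives $I\subseteq P\setminus F=F_0=J$, while $J\cap F=(P\setminus F)\cap F=\emptyset$ is automatic.

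The only genuinely substantive point is the identity $F_0=P\setminus F$, and even that is a one-line consequence of the definition of $F_0$ together with the ``exactly one'' clause of the c-condition; the remainder is an application of Theorem~\ref{th1} and elementary set theory, so I anticipate no real obstacle. It is worth noting that the c-condition on the ideal $I$ is never used: the argument requires only that $F$ satisfy the c-condition and that $I\cap F=\emptyset$. Consequently the theorem in fact holds under the weaker hypothesis that $I$ is an arbitrary ideal disjoint from $F$, with $J=F_0$ serving as the witness in every case.
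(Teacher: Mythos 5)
Your proof is correct and takes essentially the same route as the paper: both set $J:=F_0$, invoke Theorem~\ref{th1}(i) to conclude that $F_0$ is a c-ideal, and use the c-condition on $F$ to identify $F_0$ with $P\setminus F$. The only cosmetic difference is that the paper establishes $F\cap F_0=\emptyset$ via Proposition~\ref{prop1} (using that $F$ is proper since $I\neq\emptyset$ and $I\cap F=\emptyset$), whereas you extract both $F\cap F_0=\emptyset$ and $F\cup F_0=P$ directly from the ``exactly one'' clause of the c-condition; this also confirms your closing observation that the c-condition is only needed for $F$, which matches how the paper actually uses the hypothesis.
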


\begin{proof}
By Theorem~\ref{th1}, $F_0$ is a c-ideal of $\mathbf P$. Since $I\neq\emptyset$ and $I\cap F=\emptyset$ we have $F\neq P$ which implies $F\cap F_0=\emptyset$ according to Proposition~\ref{prop1}. Since $F$ satisfies the c-condition, we have $F\cup F_0=P$. Thus $I\subseteq P\setminus F=F_0$. This shows that one may take $J:=F_0$.
\end{proof}

\begin{example}\label{ex2}
Consider the following bounded posets $\mathbf P_1$ and $\mathbf P_2$ which are not lattices:

\vspace*{-4mm}

\begin{center}
\setlength{\unitlength}{7mm}
\begin{picture}(10,12)
\put(6,2){\circle*{.3}}
\put(3,5){\circle*{.3}}
\put(5,4){\circle*{.3}}
\put(7,4){\circle*{.3}}
\put(9,5){\circle*{.3}}
\put(5,6){\circle*{.3}}
\put(7,6){\circle*{.3}}
\put(6,8){\circle*{.3}}
\put(6,10){\circle*{.3}}
\put(6,2){\line(-1,1)3}
\put(6,2){\line(-1,2)1}
\put(6,2){\line(1,2)1}
\put(6,2){\line(1,1)3}
\put(5,4){\line(0,1)2}
\put(7,4){\line(0,1)2}
\put(3,5){\line(3,5)3}
\put(9,5){\line(-3,5)3}
\put(6,8){\line(0,1)2}
\put(5,6){\line(1,2)1}
\put(7,6){\line(-1,2)1}
\put(5,4){\line(1,1)2}
\put(7,4){\line(-1,1)2}
\put(5.875,1.25){$0$}
\put(2.35,4.85){$g$}
\put(4.35,3.85){$a$}
\put(7.3,3.85){$b$}
\put(9.3,4.85){$f$}
\put(4.35,5.85){$c$}
\put(7.3,5.85){$d$}
\put(5.425,8.05){$e$}
\put(5.875,10.35){$1$}
\put(4.8,.3){{\rm Fig.~2 (a)}}
\put(9.4,-.5){bounded posets}
\end{picture}
\begin{picture}(10,12)
\put(6,2){\circle*{.3}}
\put(5,4){\circle*{.3}}
\put(7,4){\circle*{.3}}
\put(9,5){\circle*{.3}}
\put(5,6){\circle*{.3}}
\put(7,6){\circle*{.3}}
\put(6,8){\circle*{.3}}
\put(6,10){\circle*{.3}}
\put(6,2){\line(-1,2)1}
\put(6,2){\line(1,2)1}
\put(6,2){\line(1,1)3}
\put(5,4){\line(0,1)2}
\put(7,4){\line(0,1)2}
\put(9,5){\line(-3,5)3}
\put(6,8){\line(0,1)2}
\put(5,6){\line(1,2)1}
\put(7,6){\line(-1,2)1}
\put(5,4){\line(1,1)2}
\put(7,4){\line(-1,1)2}
\put(5.875,1.25){$0$}
\put(4.35,3.85){$a$}
\put(7.3,3.85){$b$}
\put(9.3,4.85){$f$}
\put(4.35,5.85){$c$}
\put(7.3,5.85){$d$}
\put(5.425,8.05){$e$}
\put(5.875,10.35){$1$}
\put(4.9,.3){{\rm Fig.~2 (b)}}
\end{picture}
\end{center}

\vspace*{5mm}

If the unary operation ${}'$ is defined by
\[
\begin{array}{c|ccccccccc}
 x  & 0 & a & b & c & d & e & f & g & 1 \\
  \hline
 x' & 1 & f & f & f & f & f & e & c & 0 \\
  \hline
x'' & 0 & e & e & e & e & e & f & f & 1 \\
\end{array}
\]
then $\mathbf P_1=(P_1,\leq,{}',0,1)$ and $\mathbf P_2=(P_2,\leq,{}',0,1)$ are complemented posets and the complementation is antitone, but not an involution. Moreover, it can be seen from the table that all elements $x$ except the element $g$ satisfy the inequality $x\leq x''$.
We have

$\mathbf P_1$:

Boolean elements: $0$, $e$, $f$, $1$, \\
maximal ideals: $L(e)$, $L(f)$, $L(g)$, \\
ultrafilters: $U(a)$, $U(b)$, $U(f)$, $U(g)$, \\
$\mathbf P_1$ has neither prime ideals nor prime filters, \\
c-ideals: $L(0)$, $L(e)$, $L(f)$, $L(1)$, \\
c-filters: $U(0)$, $U(g)$, $U(1)$, \\
$\mathbf P_1$ has no filter satisfying the c-condition.

$\mathbf P_2$:

Boolean elements: $0$, $e$, $f$, $1$, \\
maximal ideals: $L(e)$, $L(f)$, \\
ultrafilters: $U(a)$, $U(b)$, $U(f)$, \\
prime ideals: $L(e)$, \\
prime filters: $U(f)$, \\
c-ideals: $L(0)$, $L(e)$, $L(f)$, $L(1)$, \\
c-filters: $U(0)$, $U(f)$, $U(1)$, \\
filters satisfying the c-condition: $U(f)$.

$\mathbf P_1$ has no filter satisfying the c-condition. \\
The situation for $\mathbf P_2$ is different. Here $U(f)$ is the unique filter satisfying the c-condition. For every ideal $I\subseteq L(e)$ we have $I\cap U(f)=\emptyset$ and taking $J:=L(e)$, $J$ is a c-ideal of $\mathbf P_2$ with $I\subseteq J$ and $J\cap U(f)=\emptyset$.
\end{example}

Example~\ref{ex2} shows that the implication (ii) $\Rightarrow$ (i) in Theorem \ref{th5} does not hold in general.

The next result is in fact another version of the First Separation Theorem where we use the result from Lemma~\ref{lem2}.

\begin{corollary}\label{cor1}
Let $\mathbf P=(P,\leq,{}',0,1)$ be a complemented poset with antitone complementation satisfying $x\leq x''$ for each $x\in P$ and let $I$ be an ideal and $F$ a prime filter of $\mathbf P$ and assume $I\cap F=\emptyset$. Then there exists some c-ideal $J$ of $\mathbf P$ with $I\subseteq J$ and $J\cap F=\emptyset$.
\end{corollary}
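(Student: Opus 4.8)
The plan is to recognize this as an almost immediate consequence of the First Separation Theorem (Theorem~\ref{th3}) together with Lemma~\ref{lem2}. The only gap between the hypotheses here and those of Theorem~\ref{th3} is that Theorem~\ref{th3} requires the filter $F$ to satisfy the c-condition, whereas here $F$ is assumed only to be a prime filter. So the single substantive step is to bridge this gap by invoking Lemma~\ref{lem2}.

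Concretely, I would first observe that all the standing structural hypotheses of Theorem~\ref{th3} are present verbatim: $\mathbf P$ is a complemented poset whose complementation is antitone and satisfies $x\leq x''$ for every $x\in P$, $I$ is an ideal of $\mathbf P$, and $I\cap F=\emptyset$. Next I would note that $F$, being a prime filter, is in particular a filter of $\mathbf P$. Then, applying Lemma~\ref{lem2} to the prime filter $F$, I would conclude that $F$ satisfies the c-condition. At this point every hypothesis of Theorem~\ref{th3} is verified, with the given $I$ and $F$.

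Having checked the hypotheses, I would finish by applying Theorem~\ref{th3} directly: it yields a c-ideal $J$ of $\mathbf P$ with $I\subseteq J$ and $J\cap F=\emptyset$, which is exactly the assertion of the corollary. (Indeed, tracing the proof of Theorem~\ref{th3}, one may take $J:=F_0$.)

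There is no genuine obstacle in this argument; the entire content lies in the implication ``prime filter $\Rightarrow$ filter satisfying the c-condition'' supplied by Lemma~\ref{lem2}. The only point that warrants a moment's care is confirming that a prime filter is automatically a filter in the sense required by Theorem~\ref{th3}, but this is immediate from the definition of a prime filter given earlier (a prime filter is by definition a filter $F$ with $F\neq P$ satisfying the additional primeness condition). Thus the corollary is a specialization of the First Separation Theorem to the case where the c-condition on $F$ is guaranteed by primeness rather than assumed outright.
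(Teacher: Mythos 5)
Your proposal is correct and matches the paper's own proof exactly: the paper likewise invokes Lemma~\ref{lem2} to conclude that the prime filter $F$ satisfies the c-condition and then applies Theorem~\ref{th3}. Your additional verification of the remaining hypotheses (including taking $J:=F_0$) is just a more explicit write-up of the same two-step argument.
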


\begin{proof}
From Lemma~\ref{lem2}  we conclude that $F$ satisfies the c-condition. Now apply Theorem~\ref{th3}.
\end{proof}

\begin{example}
Consider the complemented poset $\mathbf P$ visualized in Fig.~3:

\vspace*{-4mm}

\begin{center}
\setlength{\unitlength}{7mm}
\begin{picture}(8,8)
\put(4,1){\circle*{.3}}
\put(1,3){\circle*{.3}}
\put(3,3){\circle*{.3}}
\put(5,3){\circle*{.3}}
\put(7,3){\circle*{.3}}
\put(1,5){\circle*{.3}}
\put(3,5){\circle*{.3}}
\put(5,5){\circle*{.3}}
\put(7,5){\circle*{.3}}
\put(4,7){\circle*{.3}}
\put(4,1){\line(-3,2)3}
\put(4,1){\line(-1,2)1}
\put(4,1){\line(1,2)1}
\put(4,1){\line(3,2)3}
\put(4,7){\line(-3,-2)3}
\put(4,7){\line(-1,-2)1}
\put(4,7){\line(1,-2)1}
\put(4,7){\line(3,-2)3}
\put(1,3){\line(0,1)2}
\put(1,3){\line(1,1)2}
\put(1,3){\line(2,1)4}
\put(3,3){\line(-1,1)2}
\put(3,3){\line(2,1)4}
\put(5,3){\line(-2,1)4}
\put(5,3){\line(1,1)2}
\put(7,3){\line(-2,1)4}
\put(7,3){\line(-1,1)2}
\put(7,3){\line(0,1)2}
\put(3.85,.3){$0$}
\put(.35,2.85){$a$}
\put(2.35,2.85){$b$}
\put(5.4,2.85){$c$}
\put(7.4,2.85){$d$}
\put(.35,4.85){$d'$}
\put(2.35,4.85){$c'$}
\put(5.4,4.85){$b'$}
\put(7.4,4.85){$a'$}
\put(3.85,7.4){$1$}
\put(3.2,-.75){{\rm Fig.~3}}
\put(-2,-1.55){complemented poset with an antitone involution}
\end{picture}
\end{center}

\vspace*{8mm}

Evidently, $\mathbf P$ is neither a lattice nor distributive since
\[
L\big(U(a,b),c\big)=L(d',1)=L(d')\neq L(0)=LU(0)=LU\big(L(a,c),L(b,c)\big).
\]
We have

Boolean elements: $0$, $a$, $b$, $c$, $d$, $a'$, $b'$, $c'$, $d'$, $1$, \\
maximal ideals: $L(a')$, $L(b')$, $L(c')$, $L(d')$, \\
ultrafilters: $U(a)$, $U(b)$, $U(c)$, $U(d)$, \\
prime ideals: $L(a')$, $L(d')$, \\
prime filters: $U(a)$, $U(b)$, \\
c-ideals: $L(0)$, $L(a)$, $L(b)$, $L(c)$, $L(d)$, $L(a')$, $L(b')$, $L(c')$, $L(d')$, $L(1)$, \\
c-filters: $U(0)$, $U(a)$, $U(b)$, $U(c)$, $U(d)$, $U(a')$, $U(b')$, $U(c')$, $U(d')$, $U(1)$.

The complementation is an antitone involution. Thus the assumption $x\leq x''$ from Corollary~\ref{cor1} is satisfied. If we consider the prime filter $F=U(d)$ and the ideal $I=L(a)$ then $I\cap F=\emptyset$ and there exists a c-ideal $J=I(d')$ with $I\subseteq J$ and $J\cap F=\emptyset$.
\end{example}

Now we can formulate our second Separation Theorem for distributive complemented posets with antitone complementation satisfying the Descending Chain Condition, in particular for finite distributive complemented posets with an antitone involution. Here we need not assume that the complementation satisfies $x\leq x''$ nor that the filter in question satisfies the c-condition.

\begin{theorem}\label{th4}
{\rm(}{\bf Second Separation Theorem}{\rm)} Let $\mathbf P=(P,\leq,{}',0,1)$ be a distributive complemented poset with antitone complementation satisfying the Descending Chain Condition and let $I$ be an ideal and $F$ an ultrafilter of $\mathbf P$. Then there exists some $g\in F$ with $U(g)=F$. Now assume that $x\wedge g$ exists for every $x\in P\setminus F$ and that $I\cap F=\emptyset$. Then there exists some c-ideal $J$ of $\mathbf P$ with $I\subseteq J$ and $J\cap F=\emptyset$.
\end{theorem}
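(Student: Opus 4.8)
The first assertion is immediate from the Descending Chain Condition: by Lemma~\ref{lem3} every filter of $\mathbf P$ is principal, so $F=U(g)$ for some $g\in P$, and then $g\in U(g)=F$. Since $F$ is an ultrafilter it is proper, whence $F\neq P$ and $g\neq0$. The plan for the separation is to take $J:=F_0=\{x\in P\mid x'\in F\}$ and to prove two things: that $J=P\setminus F$ and that $J$ is an ideal. Once these hold, $J$ is automatically a c-ideal (the filter $F$ witnesses $F_0=J$), it contains $I$ because $I\cap F=\emptyset$ forces $I\subseteq P\setminus F$, and $J\cap F=(P\setminus F)\cap F=\emptyset$. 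The first structural fact I would record is that $g$ is an \emph{atom}: if there were some $h$ with $0<h<g$, then $U(h)$ would be a proper filter properly containing $U(g)=F$, contradicting the maximality of $F$.

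The heart of the argument, and the step I expect to be the main obstacle, is to show that $F=U(g)$ is a prime filter. Let $x,y\in P\setminus F$. By hypothesis $x\wedge g$ and $y\wedge g$ exist, and since $g$ is an atom each lies in $\{0,g\}$; the value $g$ is excluded, since $x\wedge g=g$ would give $g\leq x$, i.e.\ $x\in U(g)=F$. Hence $x\wedge g=y\wedge g=0$, so $L(x,g)=L(y,g)=\{0\}$. Now suppose, for contradiction, that $U(x,y)\subseteq F=U(g)$. Then every upper bound of $\{x,y\}$ dominates $g$, so $g\in L\big(U(x,y)\big)$, and trivially $g\in L(g)$, whence $g\in L\big(U(x,y),g\big)$. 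Applying distributivity in the form $L\big(U(x,y),g\big)=LU\big(L(x,g),L(y,g)\big)$ and substituting $L(x,g)=L(y,g)=\{0\}$, the right-hand side collapses to $LU(0)=L(P)=\{0\}$. Thus $g=0$, contradicting $g\neq0$. Therefore $U(x,y)\not\subseteq F$ whenever $x,y\in P\setminus F$, which is exactly the prime property of $F$.

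It remains to assemble the pieces. Since $F$ is a prime filter, Lemma~\ref{lem2} shows that $F$ satisfies the c-condition, so for each $z\in P$ exactly one of $z,z'$ lies in $F$; consequently $F_0=\{x\mid x'\in F\}=P\setminus F$. The set $J:=F_0=P\setminus F$ is downward closed because $F$ is upward closed, and it is upward directed within itself, for by the prime property just established any two elements of $J$ admit a common upper bound lying outside $F$, i.e.\ in $J$. Hence $J$ is an ideal, and being of the form $F_0$ for the filter $F$ it is a c-ideal; finally $I\subseteq P\setminus F=J$ and $J\cap F=\emptyset$, as required. I would stress that the distributivity hypothesis and the assumption that $x\wedge g$ exist for $x\notin F$ enter only in the primeness step of the second paragraph: without the atom property of $g$ and the vanishing meets $x\wedge g=0$, the distributive identity would not force $g=0$, so it is precisely there that the full strength of the hypotheses is consumed.
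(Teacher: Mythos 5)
Your argument is correct, but it takes a genuinely different route from the paper's. The paper verifies hypothesis (vii) of Theorem~\ref{th5} (computing $\bigcup\{UL(a,f)\mid f\in F\}=UL(a,g)=U(a\wedge g)$), invokes that theorem to obtain the c-condition for $F$, then uses distributivity a second time to show that the complementation is an involution (so that $x\leq x''$ holds), and finally applies the First Separation Theorem~\ref{th3}, which in turn rests on Theorem~\ref{th1} to see that $F_0$ is a c-ideal --- this is exactly where the antitone hypothesis enters. You instead exploit principality directly: $g$ is an atom by maximality of $F$, hence $x\wedge g=0$ for every $x\notin F$, and a single application of the distributive law $L\big(U(x,y),g\big)=LU\big(L(x,g),L(y,g)\big)$ shows that $F$ is a prime filter; Lemma~\ref{lem2} then gives the c-condition, so $F_0=P\setminus F$, and you check by hand that this set is downward closed and (by primeness) upward directed, hence an ideal, hence a c-ideal by definition. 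Both proofs consume distributivity and the meets $x\wedge g$ at the crucial step, but your assembly never uses the antitone hypothesis and never needs $x\leq x''$ or the involution property, so it is more self-contained and actually establishes the separation statement under weaker assumptions; what it does not deliver is the paper's by-product that under its hypotheses the complementation is automatically an involution, nor the reuse of the Theorem~\ref{th5}/Theorem~\ref{th3} machinery. One small point of care: your primeness step reads the definition of prime filter in its dual form ($U(x,y)\subseteq F$ implies $\{x,y\}\cap F\neq\emptyset$), which is clearly what the paper intends (its printed definition with $L(x,y)\subseteq F$ is a typo, as the duality argument in Lemma~\ref{lem2} shows); if you prefer to avoid this issue entirely, you can bypass Lemma~\ref{lem2} and obtain the c-condition by applying your primeness conclusion to the pair $x,x'$, since $U(x,x')=\{1\}\subseteq F$ while properness of $F$ excludes $x,x'\in F$ simultaneously.
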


\begin{proof}
Let $a\in P\setminus F$ and $b\in P$. Since $\mathbf P$ satisfies the Descending Chain Condition we have that $F$ is principal according to Lemma~\ref{lem3}, i.e.\ there exists some $g\in F$ with $U(g)=F$. Because of $L(a,g)\subseteq L(a,f)$ for all $f\in F$ we have $UL(a,f)\subseteq UL(a,g)$ for all $f\in F$ and hence $\bigcup\{UL(a,f)\mid f\in F\}=UL(a,g)=UL(a\wedge g)=U(a\wedge g)$ which shows that $\bigcup\{UL(a,f)\mid f\in F\}$ is a filter of $\mathbf P$. According to Theorem~\ref{th5} we conclude that $F$ satisfies the c-condition. Moreover,
\begin{align*}
L(b'') & =L(1,b'')=L\big(U(b',b),b''\big)=LU\big(L(b',b''),L(b,b'')\big)=LU\big(0,L(b,b'')\big)= \\
       & =LUL(b,b'')=L(b,b'')=L(b'',b)=LUL(b'',b)=LU\big(0,L(b'',b)\big)= \\
       & =LU\big(L(b',b),L(b'',b)\big)=L\big(U(b',b''),b\big)=L(1,b)=L(b)
\end{align*}
and hence $b''=b$, i.e.\ the complementation is an involution. Now apply Theorem~\ref{th3}.
\end{proof}

\begin{example}

The complemented poset $\mathbf P$ depicted in Fig.~4

\vspace*{-4mm}

\begin{center}
\setlength{\unitlength}{7mm}
\begin{picture}(8,10)
\put(4,1){\circle*{.3}}
\put(1,3){\circle*{.3}}
\put(3,3){\circle*{.3}}
\put(5,3){\circle*{.3}}
\put(7,3){\circle*{.3}}
\put(1,5){\circle*{.3}}
\put(7,5){\circle*{.3}}
\put(1,7){\circle*{.3}}
\put(3,7){\circle*{.3}}
\put(5,7){\circle*{.3}}
\put(7,7){\circle*{.3}}
\put(4,9){\circle*{.3}}
\put(4,1){\line(-3,2)3}
\put(4,1){\line(-1,2)1}
\put(4,1){\line(1,2)1}
\put(4,1){\line(3,2)3}
\put(4,9){\line(-3,-2)3}
\put(4,9){\line(-1,-2)1}
\put(4,9){\line(1,-2)1}
\put(4,9){\line(3,-2)3}
\put(1,3){\line(0,1)4}
\put(1,3){\line(1,1)4}
\put(3,3){\line(-1,1)2}
\put(3,3){\line(1,1)4}
\put(5,3){\line(-1,1)4}
\put(5,3){\line(1,1)2}
\put(7,3){\line(-1,1)4}
\put(7,3){\line(0,1)4}
\put(1,5){\line(1,1)2}
\put(7,5){\line(-1,1)2}
\put(3.85,.3){$0$}
\put(.35,2.85){$a$}
\put(2.35,2.85){$b$}
\put(5.4,2.85){$c$}
\put(7.4,2.85){$d$}
\put(.35,4.85){$e$}
\put(7.4,4.85){$e'$}
\put(.35,6.85){$d'$}
\put(2.35,6.85){$c'$}
\put(5.4,6.85){$b'$}
\put(7.4,6.85){$a'$}
\put(3.85,9.4){$1$}
\put(3.2,-.75){{\rm Fig.~4}}
\put(-3.5,-1.55){distributive complemented poset with an antitone involution}
\end{picture}
\end{center}

\vspace*{8mm}

is distributive, but not a semilattice. We have

Boolean elements: $0$, $a$, $b$, $c$, $d$, $e$, $a'$, $b'$, $c'$, $d'$, $e'$, $1$, \\
maximal ideals: $L(a')$, $L(b')$, $L(c')$, $L(d')$, \\
ultrafilters: $U(a)$, $U(b)$, $U(c)$, $U(d)$, \\
c-ideals: $L(0)$, $L(a)$, $L(b)$, $L(c)$, $L(d)$, $L(e)$, $L(a')$, $L(b')$, $L(c')$, $L(d')$, $L(e')$, $L(1)$.

One can easily check that for the ultrafilter $F=U(b)$ we have $x\wedge b=0$ for all $x\in P\setminus F$. Thus the assumptions of Theorem~\ref{th4} are satisfied. Now if $I$ denotes the ideal $L(e')$ then $I\cap F=\emptyset$ and there exists a c-ideal $J=L(b')$ with $I\subseteq J$ and $J\cap F=\emptyset$.
\end{example}

Authors' addresses:

Ivan Chajda \\
Palack\'y University Olomouc \\
Faculty of Science \\
Department of Algebra and Geometry \\
17.\ listopadu 12 \\
771 46 Olomouc \\
Czech Republic \\
ivan.chajda@upol.cz

Miroslav Kola\v r\'ik \\
Palack\'y University Olomouc \\
Faculty of Science \\
Department of Computer Science \\
17.\ listopadu 12 \\
771 46 Olomouc \\
Czech Republic \\
miroslav.kolarik@upol.cz

Helmut L\"anger \\
TU Wien \\
Faculty of Mathematics and Geoinformation \\
Institute of Discrete Mathematics and Geometry \\
Wiedner Hauptstra\ss e 8-10 \\
1040 Vienna \\
Austria, and \\
Palack\'y University Olomouc \\
Faculty of Science \\
Department of Algebra and Geometry \\
17.\ listopadu 12 \\
771 46 Olomouc \\
Czech Republic \\
helmut.laenger@tuwien.ac.at
\end{document}